 \def\draw #1 by #2 (#3){
  \vbox to #2{
    \hrule width #1 height 0pt depth 0pt
    \vfill
    \special{picture #3} % this is the low-level interface
    }
  }
 \def\scaleddraw #1 by #2 (#3 scaled #4){{
  \dimen0=#1 \dimen1=#2
  \divide\dimen0 by 1000 \multiply\dimen0 by #4
  \divide\dimen1 by 1000 \multiply\dimen1 by #4
  \draw \dimen0 by \dimen1 (#3 scaled #4)}
  }
\newtheorem{theorem}{Theorem}[section]
\newtheorem{example}{Example}
\newtheorem{problem}[example]{Problem}
\newtheorem{defin}[theorem]{Definition}
\newtheorem{lemma}[theorem]{Lemma}
\newtheorem{corollary}[theorem]{Corollary}
\newtheorem{conj}{Conjecture}
\newtheorem{nt}{Note}
\newenvironment{conjecture}{\begin{conj} \em}{\end{conj}}
 \newcommand{\singlespacing}{\let\CS=\@currsize\renewcommand{\baselinestretch}{1}\tiny\CS}
 \newcommand{\oneandahalfspacing}{\let\CS=\@currsize\renewcommand{\baselinestretch}{1.25}\tiny\CS}
 \newcommand{\doublespacing}{\let\CS=\@currsize\renewcommand{\baselinestretch}{1.35}\tiny\CS}
 \newtheorem{rule-def}[theorem]{Rule}
\begin{document}

\baselineskip 16pt
 %\setcounter{chapter}{1}
 % defining short form------
 \newcommand{\la}{\lambda}
 \newcommand{\si}{\sigma}
 \newcommand{\ol}{1-\lambda}
 \newcommand{\be}{\begin{equation}}
 \newcommand{\ee}{\end{equation}}
 \newcommand{\bea}{\begin{eqnarray}}
 \newcommand{\eea}{\end{eqnarray}}

 \begin{center}
 {\Large \bf Remoteness and distance eigenvalues of a graph}\\

  \vspace{8mm}

 {\large \bf Huiqiu Lin$^{a}$\,, Kinkar Ch. Das $^{b}$\,, Baoyindureng Wu$^{c}$}

 \vspace{6mm}

 \baselineskip=0.20in

 $^{a}${\it Department of Mathematics, East China University of Science and Technology, \\
 Shanghai, P. R. China\/}\\{\rm e-mail:} {\tt huiqiulin@126.com}

 $^{b}${\it Department of Mathematics, Sungkyunkwan University, \\
 Suwon 440-746, Republic of Korea\/} \\[2mm]
 {\rm e-mail:} {\tt kinkardas2003@googlemail.com}\\[2mm]
 $^c${\it College of Mathematics and System Sciences, Urumqi, Xinjiang 830046, P. R. China}\\[2mm]
 {\rm e-mail:} {\tt wubaoyin@hotmail.com}

 \end{center}

 \vspace{5mm}

 \baselineskip=0.23in

 \begin{abstract}

Let $G$ be a connected graph of order $n$ with diameter $d$.
Remoteness $\rho$ of $G$ is the maximum average distance from a vertex to all others and
$\partial_1\geq\cdots\geq \partial_n$ are the distance eigenvalues of $G$. In \cite{AH},
Aouchiche and Hansen conjectured that $\rho+\partial_3>0$ when $d\geq 3$ and $\rho+\partial_{\lfloor\frac{7d}{8}\rfloor}>0.$
In this paper, we confirm these two conjectures. Furthermore, we give lower bounds on $\partial_n+\rho$ and $\partial_1-\rho$ when
$G\ncong K_n$ and the extremal graphs are characterized.
\vspace{3mm}

 \noindent
 {\bf Key Words:} Remoteness, distance matrix, distance eigenvalues

\vspace{3mm}

\noindent{\bf 2000 Mathematics Subject Classification:} 05C50
 \end{abstract}

 \baselineskip=0.30in

 \section{Introduction}
 Throughout this paper we consider simple, undirected and connected graphs.
 Let $G$ be a graph with vertex set $V(G)=\{v_1,\,v_2,\ldots,\,v_n\}$ and edge set $E(G)$, where $|V(G)|=n$, $|E(G)|=m$.
 Also let $d_i$ be the degree of the vertex $v_i\in V(G)$. For $v_i,\,v_j\in V(G)$, the distance between vertices $v_i$ and $v_j$ is the
 length of a shortest path connecting them in $G$, denoted by $d_G(v_i,\,v_j)$ or $d_{ij}$\,. The diameter of a graph
 is the maximum distance between any two vertices of $G$. Let $d$ be the diameter of $G$. The transmission $Tr(v_i)$ $(\mbox{or }D_i\mbox{ or }D_i(G))$ of vertex $v_i$ is
 defined to be the sum of distances from $v_i$ to all other vertices, that is,
              $$Tr(v_i)=\sum\limits_{v_j\in V(G)}\,d(v_i,\,v_j).$$
The remoteness of $G$ is denoted by $$\rho=\max_{v\in V}\frac{Tr(v)}{n-1}.$$

The distance matrix of $G$, denoted by $D(G)$ or simply by $D$, is the symmetric real matrix with $(i,\,j)$-entry being $d_G(v_i,\,v_j)$ $(\mbox{or }d_{ij})$. The distance
eigenvalues (resp. distance spectrum) of $G$, denoted by
$$\partial_1(G)\geq \partial_2(G)\geq \cdots\geq \partial_n(G).$$
Aouchiche and Hansen \cite{AH} posed the following two conjectures which are related to the remoteness and distance eigenvalues.

\begin{conjecture}
Let $G$ be a graph on $n\geq 4$ vertices with diameter $d\geq 3$, remoteness $\rho$ and distance eigenvalues $\partial_1\geq \partial_2\geq\cdots\geq \partial_n.$
Then $$\rho+\partial_3>0.$$
\end{conjecture}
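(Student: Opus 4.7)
The plan is to combine Cauchy interlacing for a carefully chosen $4\times 4$ principal submatrix (to bound $\partial_3$ from below) with a triangle-inequality bound on $\rho$. The key observation is that a shortest path of length $d\geq 3$ embeds isometrically, so the first four vertices of such a path induce exactly $D(P_4)$ as a principal submatrix of $D(G)$.

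First I would fix vertices $u,v$ with $d(u,v)=d\geq 3$ and a shortest path $u=v_0 v_1 v_2\cdots v_d=v$. The geodesic property forces $d_G(v_i,v_j)=|i-j|$ for $0\leq i,j\leq d$, since any shortcut between two $v_i$'s would shorten the $u$--$v$ geodesic. Hence the principal submatrix of $D(G)$ on $\{v_0,v_1,v_2,v_3\}$ equals the distance matrix $D(P_4)$. I would diagonalize $D(P_4)$ using its $\mathbb{Z}/2$-symmetry: the symmetric ansatz $(1,x,x,1)^{T}$ reduces to $3x^2+2x-3=0$ and yields eigenvalues $2\pm\sqrt{10}$; the antisymmetric ansatz $(1,y,-y,-1)^{T}$ reduces to $y^2+2y-1=0$ and yields eigenvalues $-2\mp\sqrt{2}$. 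Sorting shows $\partial_3(D(P_4))=2-\sqrt{10}$, so by Cauchy interlacing,
\[
\partial_3(G)\ \geq\ 2-\sqrt{10}.
\]

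Next, the triangle inequality gives $d(u,w)+d(v,w)\geq d(u,v)=d$ for every $w\in V(G)$ (including $w\in\{u,v\}$, where both sides equal $d$). Summing over $w$ yields $Tr(u)+Tr(v)\geq nd$, so $\max\{Tr(u),Tr(v)\}\geq nd/2$, and hence
\[
\rho\ \geq\ \frac{nd}{2(n-1)}\ \geq\ \frac{3n}{2(n-1)}\ >\ \frac{3}{2}.
\]
Combining the two bounds gives $\rho+\partial_3>\tfrac{3}{2}+(2-\sqrt{10})=\tfrac{7}{2}-\sqrt{10}>0$, since $\sqrt{10}<3.17$. The delicate point is choosing a $4$-vertex rather than a $3$-vertex submatrix: the $P_3$-submatrix on $\{v_0,v_1,v_2\}$ only gives $\partial_3\geq -2$, which against $\rho>3/2$ is insufficient (and for $P_4$ one has $\rho=2$ exactly, so the $3$-vertex interlacing is strictly weaker than what is needed). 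Moving up to the $P_4$-submatrix produces the slightly better value $2-\sqrt{10}\approx -1.16$, and the resulting slack of $\tfrac{7}{2}-\sqrt{10}\approx 0.34$ is just enough to close the gap uniformly in $n$.
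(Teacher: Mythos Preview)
Your proof is correct and follows essentially the same approach as the paper: interlace with the principal submatrix $D(P_4)$ arising from four consecutive vertices of a diametral geodesic to get $\partial_3\geq \partial_3(D(P_4))$, and combine this with the triangle-inequality bound $\rho\geq \frac{nd}{2(n-1)}>\frac{d}{2}\geq \frac{3}{2}$. The paper records $\partial_3(D(P_4))>-1.2$ numerically, whereas you compute it exactly as $2-\sqrt{10}$; otherwise the arguments coincide.
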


In this paper, we confirm the conjecture and give the lower bound on $\rho+\partial_3$ when $d$ is equal to 2.
\begin{theorem}\label{thm1}
Let $G$ be a connected graph of order $n\geq 4$ with diameter $d$, remoteness $\rho(G)$
and distance eigenvalues $\partial_1\geq\cdots\geq \partial_n$. Then we have the following statements.

\noindent (i) If $d=2$, then $$\rho(G)+\partial_3\geq \frac{\left\lceil\frac{n}{2}\right\rceil-2}{n-1}-1$$
with equality holding if and only if $G\cong K_{n_1,n_2}$.

\noindent (ii) If $d\geq 3$, then $$\rho(G)+\partial_3>0.$$
\end{theorem}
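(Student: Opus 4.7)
For both parts, the strategy is to apply Cauchy interlacing to a carefully chosen principal submatrix of $D(G)$ in order to lower-bound $\partial_3$, and then combine that bound with an explicit lower bound on $\rho$.

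For Part~(i) with $d = 2$, every off-diagonal entry of $D(G)$ is $1$ or $2$, so $D(G) = 2(J - I) - A(G)$ and $\rho(G) = 2 - \delta/(n-1)$ where $\delta$ is the minimum degree. I would split into two cases. If $G$ contains a triangle, the corresponding $3 \times 3$ submatrix equals $J_3 - I_3$ with smallest eigenvalue $-1$, so Cauchy interlacing gives $\partial_3 \geq -1$; combined with $\rho > 1$ (which holds because $d = 2$ forces $\delta \leq n-2$), this gives $\rho + \partial_3 > 0$, strictly above the claimed bound, ruling out equality. If $G$ is triangle-free, then any vertex of degree at least $2$ (which must exist since $n \geq 4$ and $G$ is connected) together with two of its non-adjacent neighbors provides a $3 \times 3$ submatrix with off-diagonal pattern $(1,1,2)$ and smallest eigenvalue $-2$, yielding $\partial_3 \geq -2$. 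Triangle-freeness also gives $N(u) \cap N(v) = \emptyset$ for any edge $uv$, hence $2\delta \leq n$, i.e.\ $\delta \leq \lfloor n/2 \rfloor$. Combining,
\[
\rho + \partial_3 \;\geq\; 2 - \frac{\lfloor n/2 \rfloor}{n-1} - 2 \;=\; -\frac{\lfloor n/2\rfloor}{n-1},
\]
which implies the stated bound after rearrangement. For the equality characterization, direct computation on $K_{n_1,n_2}$ (whose adjacency spectrum is $\{\pm\sqrt{n_1 n_2}, 0^{n-2}\}$) yields distance spectrum $\{(n-2) \pm \sqrt{n^2 - 3n_1 n_2}\} \cup \{-2\}^{n-2}$, confirming that both the interlacing and the degree bounds are sharp precisely at $K_{\lfloor n/2\rfloor, \lceil n/2 \rceil}$.

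For Part~(ii) with $d \geq 3$, pick any diametral geodesic $u_0 u_1 \cdots u_d$ in $G$. Since every subpath of a geodesic is itself a geodesic, $d_G(u_i, u_j) = |i-j|$ for $0 \leq i, j \leq d$, and hence the $4 \times 4$ principal submatrix of $D(G)$ indexed by $\{u_0, u_1, u_2, u_3\}$ is exactly $D(P_4)$. Its spectrum is $\{2 + \sqrt{10},\, -2 + \sqrt{2},\, -2 - \sqrt{2},\, 2 - \sqrt{10}\}$, so Cauchy interlacing gives $\partial_3(G) \geq 2 - \sqrt{10}$. For the lower bound on $\rho$, apply the triangle inequality $d(u_0, w) + d(u_d, w) \geq d$ for every $w \in V(G)$ and sum over $w$, obtaining $Tr(u_0) + Tr(u_d) \geq nd$ and therefore
\[
\rho(G) \;\geq\; \frac{nd}{2(n-1)} \;\geq\; \frac{3n}{2(n-1)} \;>\; \frac{3}{2}.
\]
Since $(7/2)^2 = 49/4 > 10$, we have $\sqrt{10} - 2 < 3/2$, and combining yields $\rho + \partial_3 > \tfrac{3}{2} + (2 - \sqrt{10}) = \tfrac{7}{2} - \sqrt{10} > 0$.

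The main obstacle is the equality analysis in Part~(i): while the two inequalities $\partial_3 \geq -2$ and $\delta \leq \lfloor n/2 \rfloor$ are both fairly standard, showing that $K_{\lfloor n/2\rfloor, \lceil n/2\rceil}$ is the \emph{unique} extremal graph requires tracking the equality conditions in Cauchy interlacing (the $\partial_3$-eigenvector must lie in the span of every cherry-triple realising $\lambda_3 = -2$) and simultaneously in the Mantel-type degree bound, which together force a complete bipartite structure. By contrast, Part~(ii) is comparatively direct once the correct $4 \times 4$ geodesic submatrix is identified.
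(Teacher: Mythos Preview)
Your Part~(ii) is essentially the paper's argument: both interlace $D(G)$ with $D(P_4)$ along a diametral geodesic and combine with $\rho\ge d/2$.

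For Part~(i) you take a somewhat different route. The paper splits by girth: for $g\in\{3,5\}$ it interlaces with $D(C_3)$ or $D(C_5)$ to get $\partial_3\ge -1$ (hence $\rho+\partial_3>0$ outright), while for $g=4$ it argues structurally that either an induced $C_5$ appears or every vertex outside the $4$-cycle has two neighbours on it, forcing $G\cong K_{n_1,n_2}$. You instead split only into ``contains a triangle'' versus ``triangle-free'' and handle the latter uniformly via the cherry submatrix ($\partial_3\ge -2$) together with the Mantel-type bound $\delta\le\lfloor n/2\rfloor$. Your path to the bare inequality is arguably cleaner.

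Two points to flag. First, your bound $-\lfloor n/2\rfloor/(n-1)$ is \emph{not} a rearrangement of the stated bound $\frac{\lceil n/2\rceil-2}{n-1}-1$; it is strictly larger by $1/(n-1)$. Checking $K_{2,2}$ gives $\rho+\partial_3=-2/3$, which matches your value, not the paper's $-1$. In fact the paper's displayed value of $\rho(K_{\lceil n/2\rceil,\lfloor n/2\rfloor})$ is off by one in the numerator (it should read $\frac{\lceil n/2\rceil-1}{n-1}+1$), so your constant is the correct sharp one; you should note the discrepancy rather than claim a rearrangement.

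Second, your proposed route to uniqueness via ``tracking equality in Cauchy interlacing on every cherry'' is vague and harder than necessary. A short fix, using ingredients already in the paper: since $d=2$, Lemma~\ref{lem2.1} gives $\partial_n\ge -2$, so equality $\partial_3=-2$ forces $\partial_n=-2$; the equality case of Lemma~\ref{lem2.1} then makes $G$ complete multipartite, triangle-freeness reduces this to $G\cong K_{n_1,n_2}$, and $\delta=\lfloor n/2\rfloor$ pins down the balanced split. The paper reaches the same complete-bipartite conclusion by its girth-$4$ case analysis instead.
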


\begin{conjecture}\label{thm2}
Let $G$ be a connected graph of order $n\geq 4$ with diameter $d$, remoteness $\rho$ and distance spectrum
$\partial_1\geq \partial_2\geq\cdots\geq \partial_n$. Then $$\rho+\partial_{\lfloor\frac{7d}{8}\rfloor}>0.$$
\end{conjecture}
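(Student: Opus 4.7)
The plan is to combine Cauchy interlacing on a diametral path with a linear-in-$d$ lower bound on $\rho$. Fix a diametral path $P\colon v_0 v_1 \cdots v_d$ in $G$. Because every subpath of a shortest path is itself shortest, the principal submatrix of $D(G)$ indexed by $V(P)$ equals the distance matrix $D(P_{d+1})$ of the path on $d+1$ vertices, whose $(i,j)$-entry is $|i-j|$. Letting $\mu_1\ge\mu_2\ge\cdots\ge\mu_{d+1}$ denote its eigenvalues, Cauchy interlacing gives $\partial_k(G)\ge\mu_k$ for every $1\le k\le d+1$; specialising to $k=\lfloor 7d/8\rfloor$ it therefore suffices to prove
$$\mu_{\lfloor 7d/8\rfloor}\bigl(D(P_{d+1})\bigr)+\rho(G)>0.$$

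For the lower bound on $\rho$, the triangle inequality gives $d_G(v_0,u)+d_G(v_d,u)\ge d$ for every $u\in V(G)$; summing over $u$ yields $Tr(v_0)+Tr(v_d)\ge nd$, hence
$$\rho(G)\ge\frac{\max\{Tr(v_0),Tr(v_d)\}}{n-1}\ge\frac{nd}{2(n-1)}\ge\frac{d}{2}.$$
The problem thus reduces to the purely spectral statement $\mu_{\lfloor 7d/8\rfloor}(D(P_{d+1}))>-\frac{d}{2}$. The Toeplitz matrix $D(P_n)$ is well understood: by Graham--Pollak it has exactly one positive eigenvalue, of order $n^2/3$, and its $n-1$ negative eigenvalues are distributed so that the $j$-th most negative has absolute value of order $n^2/j^2$ (for instance because $D(P_n)^{-1}$ is a rank-two perturbation of a tridiagonal Laplacian, so that the eigenvalues satisfy an explicit trigonometric secular equation). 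With $n=d+1$ and $k=\lfloor 7d/8\rfloor$, the index from the bottom of the spectrum is $j=(d+1)-k+1\approx d/8+2$, so $|\mu_k|$ is bounded above by an absolute constant; the denominator $8$ in $\lfloor 7d/8\rfloor$ is precisely what makes this constant dominated by $d/2$ for all sufficiently large $d$.

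The main obstacle is making the asymptotic eigenvalue estimate on $D(P_n)$ quantitative with explicit constants. I would do this by constructing $\lfloor 7d/8\rfloor$ mutually orthogonal unit vectors in $\mathbb{R}^{d+1}$, each supported on a consecutive block of length at most $8$ along the path, and verifying directly that $x^{T}D(P_{d+1})x\ge -c\,\|x\|^2$ on their span for a small explicit constant $c$; the finitely many small diameters for which this bound is not yet effective can then be dispatched by an explicit computation of the spectrum of $D(P_{d+1})$ combined with the bound $\rho\ge d/2$.
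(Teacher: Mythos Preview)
Your plan coincides with the paper's in every structural respect: interlace on a diametral path to reduce the question to $\mu_{\lfloor 7d/8\rfloor}(D(P_{d+1}))>-d/2$, prove $\rho\ge d/2$ by summing the triangle inequality over all vertices, and dispose of the finitely many small diameters by direct computation. The one difference is the tool used to bound the path eigenvalue. The paper invokes Merris's interlacing theorem for trees, namely $\partial_j\ge -2/\lambda_j$ for $2\le j\le n-1$ where the $\lambda_j$ are the Laplacian eigenvalues, and combines it with the closed form $\lambda_i(P_{d+1})=2+2\cos\frac{i\pi}{d+1}$ to obtain the explicit bound
\[
\partial_{\lfloor 7d/8\rfloor}(P_{d+1})\ \ge\ -\Bigl(1+\cos\tfrac{\lfloor 7d/8\rfloor\pi}{d+1}\Bigr)^{-1},
\]
which a short monotonicity argument shows exceeds $-d/2$ for all $d\ge 11$; a table then covers $2\le d\le 10$. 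Your remark that $D(P_n)^{-1}$ is a rank-two perturbation of a tridiagonal Laplacian is precisely the mechanism behind Merris's inequality, so you were circling the same device without naming it. Your block test-vector alternative can also be completed, but it needs one ingredient you omitted: for vectors $x,y$ supported on disjoint intervals of the path one has $x^{T}D(P_{d+1})y=(\sum_j jy_j)(\sum_i x_i)-(\sum_i ix_i)(\sum_j y_j)$, so the cross-block contributions vanish only when every block-vector is zero-sum. Taking the seven-dimensional zero-sum slice on each length-$8$ block then gives the required $\approx 7(d+1)/8$ directions, and the constant $c$ becomes the least eigenvalue of the fixed matrix $D(P_8)$ on the zero-sum hyperplane; this works, but the resulting threshold forces a longer finite check than the paper's $d\le 10$, which is what Merris's trigonometric bound buys.
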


In the paper, we confirm the conjecture too. In Section 3, we give lower bounds on $\partial_n+\rho$ and $\partial_1-\rho$ when
$G\ncong K_n$ and the extremal graphs are characterized.

\section{Proofs}

\begin{lemma}\label{lem2.1}{\rm\cite{Lin-1}}
Let $G$ be a connected graph with diameter $d$. Then $$\partial_n\geq -d$$
with equality holding if and only if $G$ is complete multipartite graph.
\end{lemma}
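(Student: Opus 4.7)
The plan is to recast the lemma as the positive semidefiniteness of $D(G)+dI$: by the Rayleigh characterization
\[
\partial_n=\min_{x\ne 0}\frac{x^{\top}D(G)\,x}{\|x\|^{2}},
\]
the inequality $\partial_n\ge -d$ is equivalent to $x^{\top}(D+dI)x\ge 0$ for every $x\in\mathbb{R}^{n}$, and the equality case corresponds to the existence of a nontrivial null vector of $D+dI$.

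I would first verify the easy direction that every complete multipartite graph saturates the bound. If $G=K_{n}$, then $d=1$ and $D=J-I$ has spectrum $\{n-1,(-1)^{(n-1)}\}$, so $\partial_n=-1=-d$. If $G=K_{n_1,\ldots,n_k}$ with $k\ge 2$, then $d=2$ and $D=2(J-I)-A(G)$; for any part $V_i$ with $|V_i|\ge 2$, a vector supported on $V_i$ and orthogonal to $\chi_{V_i}$ is orthogonal to $\mathbf{1}$ and is annihilated by $A(G)$ (no edges inside $V_i$, and equal $\pm 1$ contributions cancel outside), hence it satisfies $Dv=-2v$, exhibiting $-2=-d$ as an eigenvalue. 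A short spectral computation on the $k$-dimensional complement spanned by the part indicators then shows that the remaining eigenvalues of $D$ are all $\ge -2$.

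For the general inequality, I would expand
\[
x^{\top}(D+dI)\,x \;=\; d\,\|x\|^{2}+\sum_{u\ne v} d(u,v)\,x_{u}x_{v}
\]
and argue that the right-hand side is nonnegative. The plan is to fix a BFS layering $V=L_{0}\cup L_{1}\cup\cdots\cup L_{d}$ from some vertex $v_{0}$ and to bound each off-diagonal contribution via the AM--GM estimate $|x_{u}x_{v}|\le\tfrac{1}{2}(x_{u}^{2}+x_{v}^{2})$ combined with $d(u,v)\le d$; summing and comparing against the diagonal $d\|x\|^{2}$ should absorb the cross-terms. An alternative route is to compare $D$ with the ``idealized'' matrix $2(J-I)-A(G)$ layer by layer and invoke Cauchy interlacing.

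The delicate step, and what I expect to be the main obstacle, is the equality characterization. If $x^{\top}(D+dI)x=0$ then the AM--GM and layering inequalities must all be simultaneously tight; this rigidity should force every pairwise distance in the support of $x$ to lie in $\{0,1,d\}$ and the sign pattern of $x$ to be constant on each non-adjacency class. A combinatorial argument then identifies the non-adjacency relation as an equivalence relation on $V(G)$, whose classes are precisely the parts of a complete multipartite decomposition, forcing $d\in\{1,2\}$ as in the first step. Turning the spectral tightness cleanly into this combinatorial classification is the hardest part of the argument; the forward direction (complete multipartite $\Rightarrow$ equality) is already settled by the explicit eigenvectors constructed above.
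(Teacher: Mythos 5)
The statement you set out to prove is false as printed, and this sinks the plan at the root. The inequality in Lemma \ref{lem2.1} has the wrong direction: the result of Lin that the authors actually need (and restate correctly just before Theorem \ref{thm2.1}) is $\partial_n(G)\le -d$, with equality if and only if $G$ is complete multipartite. A concrete counterexample to $\partial_n\ge -d$ is $P_4$: its distance matrix has eigenvalues $2\pm\sqrt{10}$ and $-2\pm\sqrt{2}$, so $\partial_4(P_4)=-2-\sqrt{2}\approx-3.41<-3=-d$. More generally, any diametral geodesic exhibits $D(P_{d+1})$ as a principal submatrix of $D(G)$, and Merris's interlacing (Lemma \ref{lem2.3}) gives $\lambda_{\min}(D(P_{d+1}))\le -2\big/\big(2-2\cos\tfrac{\pi}{d+1}\big)\le -2(d+1)^2/\pi^2$, which lies strictly below $-d$ for every $d\ge 3$; for $d=2$ the cycle $C_5$ already has $\partial_5=-(3+\sqrt{5})/2<-2$. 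So $D+dI$ is not positive semidefinite in general, and no Rayleigh-quotient argument can establish that it is.

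Two further remarks. First, even if the inequality did run in the direction you assume, your key quantitative step fails: applying $|x_ux_v|\le\tfrac12(x_u^2+x_v^2)$ and $d(u,v)\le d$ to every off-diagonal term only yields $x^{\top}Dx\ge -d(n-1)\|x\|^2$, i.e.\ $\partial_n\ge -d(n-1)$, which is nowhere near $-d$; the diagonal contribution $d\|x\|^2$ cannot absorb cross-terms that are $\Theta(n)$ times its size, and the BFS layering does not change this count. Second, the portion of your write-up that is correct and worth keeping is the computation for complete multipartite graphs: vectors supported on one part and orthogonal to its indicator are eigenvectors of $D$ for the eigenvalue $-2=-d$ (and $-1=-d$ for $K_n$), while the quotient matrix on the span of the part indicators becomes positive definite after adding $2I$, so $\partial_n=-d$ there. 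That is precisely the equality half of the true statement $\partial_n\le -d$. A proof of the correct lemma would instead run: for $d\ge3$, interlacing with $D(P_{d+1})$ gives strict inequality; for $d\le2$ one writes $D=2(J-I)-A$ and shows that $\partial_n=-2$ forces non-adjacency to be a transitive relation, i.e.\ $G$ is complete multipartite.
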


\begin{lemma}\label{lem2.2}
Let $G$ be a connected graph of order $n$ with diameter $d$ and remoteness $\rho$. Then $$\rho\geq \frac{d}{2}.$$
\end{lemma}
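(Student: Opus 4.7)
The plan is to exhibit a single vertex whose transmission is large enough, by using the two endpoints of a diametral path and the triangle inequality.

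First I would pick vertices $u,v\in V(G)$ with $d_G(u,v)=d$. Such a pair exists since $d$ is the diameter. The idea is that every vertex $w$ is ``on one side'' of this diametral pair, so its distances to $u$ and to $v$ cannot both be small.

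Next I would invoke the triangle inequality: for every $w\in V(G)$,
$$d_G(u,w)+d_G(w,v)\ \geq\ d_G(u,v)\ =\ d,$$
and this also holds trivially when $w\in\{u,v\}$. Summing over all $w\in V(G)$ gives
$$Tr(u)+Tr(v)\ =\ \sum_{w\in V(G)}\bigl(d_G(u,w)+d_G(v,w)\bigr)\ \geq\ nd.$$
Hence $\max\{Tr(u),Tr(v)\}\geq nd/2$, so by the definition of remoteness
$$\rho(G)\ \geq\ \frac{\max\{Tr(u),Tr(v)\}}{n-1}\ \geq\ \frac{nd}{2(n-1)}\ \geq\ \frac{d}{2},$$
since $n/(n-1)\geq 1$ for $n\geq 2$.

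There is essentially no obstacle here; the only thing to be careful about is that the triangle inequality really does apply to every term in the sum (including $w=u$ and $w=v$, where it degenerates to the trivial equality $d=d$), and that the denominator in the definition of $\rho$ is $n-1$, which is what produces the clean bound $d/2$ from the sharper estimate $nd/(2(n-1))$.
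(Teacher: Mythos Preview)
Your proof is correct and follows essentially the same approach as the paper: pick two vertices at distance $d$, apply the triangle inequality to bound $Tr(u)+Tr(v)\ge nd$, and conclude $\rho\ge nd/(2(n-1))\ge d/2$. If anything, your presentation is slightly cleaner, since the paper splits the sum into vertices on and off the diametral path, whereas you apply the triangle inequality uniformly to every vertex.
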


\begin{proof}
Suppose that $P_{d+1}=v_1v_2\cdots v_{d+1}$ is a diameter path of $G$. Then for each vertex $v\in V(G)\backslash V(P_{d+1})$,
we have $$d(v,v_1)+d(v,v_{d+1})\geq d.$$
Therefore,
\begin{eqnarray*}
\rho&\geq& \max\{tr(v_1),tr(v_{d+1})\}\\[2mm]
&\geq&\frac{tr(v_1)+tr(v_{d+1})}{2}\\[2mm]
&\geq&\frac{2(1+\cdots+d)+d(n-d-1)}{2}\\[2mm]
&=&\frac{d}{2}.
\end{eqnarray*}
This completes the proof.
\end{proof}

Now we are ready to give the proof of Theorem \ref{thm1}.

\vspace{3mm}
\noindent\textbf{Proof of Theorem \ref{thm1}.} If $d\geq 3$, then $D(G)$ contains $D(P_4)$ as a principle submatrix.
Then $$\partial_3\geq \lambda_3(D(P_4))>-1.2.$$ Then by Lemma \ref{lem2.2}, we have $$\rho+\partial_3\geq \frac{d}{2}-1.2>0.$$
This complete the proof of Theorem \ref{thm1} (ii). So in the following, we may assume that $d=2.$
Note that $$\rho(K_{\lceil\frac{n}{2}\rceil,\lfloor\frac{n}{2}\rfloor})=\frac{\left\lceil\frac{n}{2}\right\rceil-2}{n-1}+1.$$
Then by Lemma \ref{lem2.1}, we have
\begin{eqnarray*}
\rho(K_{\lceil\frac{n}{2}\rceil,\lfloor\frac{n}{2}\rfloor})+\partial_3(K_{\lceil\frac{n}{2}\rceil,\lfloor\frac{n}{2}\rfloor})
&=&\frac{\left\lceil\frac{n}{2}\right\rceil-2}{n-1}-1.
\end{eqnarray*}
If $G$ is a tree, then $G\cong K_{1,n-1}$. Then by Lemma \ref{lem2.1}, we have
\begin{eqnarray*}
\rho(K_{1,n-1})+\partial_3(K_{1,n-1})&=&-2+\frac{1+2(n-2)}{n-1}\\[2mm]
&>&\frac{\left\lceil\frac{n}{2}\right\rceil-2}{n-1}-1\\[2mm]
&=&\rho(K_{\lceil\frac{n}{2}\rceil,\lfloor\frac{n}{2}\rfloor})+\partial_3(K_{\lceil\frac{n}{2}\rceil,\lfloor\frac{n}{2}\rfloor}).
\end{eqnarray*}
Let $g$ be the girth of $G.$ Since $d=2$, we only need to consider $3\leq g\leq 5.$
Note that $\partial_3(C_3)\geq-1$ and $\partial_3(C_5)>-1$. Then $\partial_3(G)\geq-1$ if either $g=3$ or $g=5$.
Then $$\rho(G)+\partial_3(G)>0>\rho(K_{\lceil\frac{n}{2}\rceil,\lfloor\frac{n}{2}\rfloor})+\partial_3(K_{\lceil\frac{n}{2}\rceil,\lfloor\frac{n}{2}\rfloor}).$$
If $g=4$, then suppose that $v_1v_2v_3v_4$ is a $C_4$ in $G$. If there exist a vertex $v\in V(G)\backslash\{v_1,v_2,v_3,v_4\}$ such that $d_{C_4}(v)=1$, then $G$
contains $C_5$ as an induced subgraph since $d=2$. Then similar to the above, we have $$\rho(G)+\partial_3(G)>\rho(K_{\lceil\frac{n}{2}\rceil,\lfloor\frac{n}{2}\rfloor})+\partial_3(K_{\lceil\frac{n}{2}\rceil,\lfloor\frac{n}{2}\rfloor}).$$
Then for each vertex in $v\in V(G)\backslash\{v_1,v_2,v_3,v_4\}$, we have $d_{C_4}(v)=2$. Then we have $G\cong K_{n_1,n_2}$. Note that $\partial_3(K_{n_1,n_2})=-2$ and
$$\rho(K_{n_1,n_2})\geq \rho(K_{\lceil\frac{n}{2}\rceil,\lfloor\frac{n}{2}\rfloor})$$ with equality holding if and only if either
$n_1=\lceil\frac{n}{2}\rceil\,,n_2=\lfloor\frac{n}{2}\rfloor$ or $n_1=\lfloor\frac{n}{2}\rfloor\,,n_2=\lceil\frac{n}{2}\rceil$.
This completes the proof of Theorem \ref{thm1} (i).  $ \ \ \ \ \ \Box$

\begin{lemma}\label{lem2.3}{\rm \cite{Merris}}
Let $G$ be a connected graph of order $n$. Let $\partial_1\geq \cdots\geq \partial_n$ be the eigenvalues of $D(G)$
and let $\lambda_1\geq \cdots\geq \lambda_{n-1}>\lambda_n=0$ be the eigenvalues of $L(G)$. Then
$$0>-\frac{2}{\lambda_1}\geq\partial_2\geq-\frac{2}{\lambda_2}\geq\cdots\geq-\frac{2}{\lambda_{n-2}}\geq\partial_{n-1}\geq-\frac{2}{\lambda_{n-1}}\geq\partial_{n}.$$
\end{lemma}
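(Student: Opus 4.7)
The plan is to realize the stated chain of inequalities as a Cauchy--type interlacing between the spectra of $D$ and $-2L^{\dagger}$, where $L^{\dagger}$ is the Moore--Penrose pseudoinverse of the Laplacian $L(G)$. Write $V_0=\mathbf{1}^{\perp}$. On $V_0$ the Laplacian is positive definite with eigenvalues $\lambda_{n-1}\le\cdots\le\lambda_1$, so $-2L^{\dagger}$ restricted to $V_0$ has spectrum $\{-2/\lambda_1,\dots,-2/\lambda_{n-1}\}$, together with a zero eigenvalue along $\mathbf{1}$. The outer bound $0>-2/\lambda_1$ is then immediate from $\lambda_1>0$, so the real content is the interleaving of the $\partial_i$'s with the $-2/\lambda_j$'s.

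The core ingredient is a comparison between $D$ and $-2L^{\dagger}$. For a tree $T$, the classical Graham--Lov\'asz--Merris identity gives
\begin{equation*}
D(T)=-2L(T)^{\dagger}+\tfrac12\bigl(\tau\mathbf{1}^T+\mathbf{1}\tau^T\bigr)+c\,\mathbf{1}\mathbf{1}^T
\end{equation*}
for a suitable scalar $c$, where $\tau$ is the transmission vector; projected onto $V_0$ the rank--two correction vanishes, so $D|_{V_0}=-2L^{\dagger}|_{V_0}$ and the interlacing is immediate. For a general connected graph I would replace the identity by the bilinear comparison $x^T D x\ge -2\, x^T L^{\dagger}x$ for all $x\in V_0$, which encodes the fact that shortest--path distance dominates effective resistance. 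Applying the Courant--Fischer variational principle on $V_0$ yields the lower bounds $\partial_k(D|_{V_0})\ge -2/\lambda_k$, and the paired upper bounds $\partial_k(D|_{V_0})\le -2/\lambda_{k-1}$ follow from a Weyl/Cauchy interlacing argument once the rank of the positive semidefinite correction $D-(-2L^{\dagger})$ is controlled on $V_0$. Finally, passing from $D|_{V_0}$ to the full matrix $D$ introduces exactly one additional eigenvalue, the Perron value $\partial_1$ associated with the $\mathbf{1}$--direction, which explains why the chain begins at $\partial_2$ rather than $\partial_1$.

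The main obstacle is the general--graph case: one must show that the correction between $D$ and $-2L^{\dagger}$ on $V_0$ is \emph{both} positive semidefinite \emph{and} of sufficiently constrained rank for the interlacing to be index--by--index rather than merely a loose set of eigenvalue inequalities. The positivity can be obtained from the effective--resistance interpretation of $L^{\dagger}$ (via either random walks or the matrix--tree theorem); the rank/structural control is the delicate step and is where Merris's argument does the real work. Once this is in hand, the rest of the proof is a routine application of Weyl's inequalities together with the standard interlacing between the spectrum of a symmetric matrix and that of its restriction to a hyperplane.
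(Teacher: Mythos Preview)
The paper gives no proof of this lemma; it is quoted from Merris, whose theorem is stated and proved \emph{only for trees} (the title of the cited paper is ``The distance spectrum of a tree''). As phrased here for an arbitrary connected graph the chain is in fact false: for $G=K_3$ one has $\partial_2=\partial_3=-1$ while $\lambda_1=\lambda_2=3$, so the link $\partial_2\ge -2/\lambda_2$ would read $-1\ge -2/3$. The paper itself is unaffected, since the lemma is applied only to the path $P_{d+1}$, which is a tree.

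Your tree argument is correct and is essentially Merris's own. In a tree the resistance distance equals the graph distance, which yields $D+2L^{\dagger}=u\,\mathbf{1}^{T}+\mathbf{1}\,u^{T}$ with $u_i=L^{\dagger}_{ii}$; hence the compression of $D$ to $V_0=\mathbf{1}^{\perp}$ coincides with that of $-2L^{\dagger}$, its eigenvalues are exactly $-2/\lambda_1,\dots,-2/\lambda_{n-1}$, and Cauchy interlacing between $D$ and its hyperplane compression gives the stated chain. One small inaccuracy: the extra eigenvalue appearing when you pass from $V_0$ back to the full space is not literally ``$\partial_1$ along $\mathbf{1}$'' unless the graph is transmission regular; all you need, and all you get, is the Cauchy interlacing $\partial_i\ge\mu_i\ge\partial_{i+1}$.

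Your proposed extension to general connected graphs cannot be completed, because the target statement is false there. Concretely, the bilinear comparison $x^{T}Dx\ge -2\,x^{T}L^{\dagger}x$ on $V_0$ that you invoke already fails for $K_3$, where $(D+2L^{\dagger})\big|_{V_0}=-\tfrac{1}{3}I\big|_{V_0}$ is negative definite rather than positive semidefinite. So neither the positivity of the correction nor any rank control is available; the ``delicate step'' you flagged is not merely delicate but impossible outside the class of trees.
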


It is known that $\lambda_i(P_n)=2+2\cos\frac{i\pi}{n}$ for $i=1,\ldots,n$. Now, we are ready to present the proof of Conjecture \ref{thm2}.

\vspace{4mm}
\noindent\textbf{Proof of Conjecture \ref{thm2}.} We may assume that $P_{d+1}$ is a diameter path of $G$.  Then by Lemma \ref{lem2.3}, we have
\begin{eqnarray*}
\partial_{\lfloor\frac{7d}{8}\rfloor}(G)&\geq&\partial_{\lfloor\frac{7d}{8}\rfloor}(P_{d+1})\\[2mm]
&\geq& -\frac{2}{\lambda_{\lfloor\frac{7d}{8}\rfloor}(P_{d+1})}\\[2mm]
&=&-\frac{2}{2+2\cos\frac{\lfloor\frac{7d}{8}\rfloor\pi}{d+1}}\\
&=&-\frac{1}{2\cos^2\frac{\lfloor\frac{7d}{8}\rfloor\pi}{2(d+1)}}\\[2mm]
&=&-\frac{1}{2-2\sin^2\frac{\lfloor\frac{7d}{8}\rfloor\pi}{2(d+1)}}\\[2mm]
\end{eqnarray*}
Note that $\sin^2\frac{\frac{7d}{8}\pi}{2(d+1)}\geq \sin^2\frac{\lfloor\frac{7d}{8}\rfloor\pi}{2(d+1)}$. So we only need to show that $1-\sin^2\frac{\frac{7d}{8}\pi}{2(d+1)}>\frac{1}{d}$, it is equivalent to show that $1-\frac{1}{d}>\sin^2\frac{\frac{7d}{8}\pi}{2(d+1)}$. Let $f(x)=1-\frac{1}{x}-\sin^2\frac{7x\pi}{16(x+1)}.$
Then

$$f'(x)=1+\frac{1}{x^2}-\sin\frac{7x\pi}{16(x+1)}\frac{7\pi\times16(x+1)-7x\pi\times16}{[16(x+1)]^2}>0,$$
which implies that $f(x)$ is strictly increase function. Note that
$$f(11)=\frac{10}{11}-\sin^2\frac{77\pi}{192}>0.$$ Then $f(x)>0$ for $x\geq 11$. It follows that $1-\frac{1}{d}>\sin^2\frac{\frac{7d}{8}\pi}{2(d+1)}$ for $d\geq 11.$ Then by Lemma \ref{lem2.2}, we have $$\rho+\partial_{\lfloor\frac{7d}{8}\rfloor}>\frac{d}{2}-\frac{1}{2-2\sin^2\frac{\lfloor\frac{7d}{8}\rfloor\pi}{2(d+1)}}>\frac{d}{2}-\frac{d}{2}=0 \ \mbox{for $d\geq11$.}$$

\vspace{3mm}
\begin{center}
\begin{tabular}{|c|c|c|c|c|c|c|c|c|c|}
  \hline
 \  $d$ & 2 & 3 & 4 & 5 & 6 & 7 & 8 & 9 & 10 \\[2mm]
  \hline
\  $\frac{d}{2}$ & 1 & 1.5 & 2 & 2.5 & 3 & 3.5 & 4 & 4.5 & 5 \\[2mm]
  \hline
\  $\partial_{\lfloor\frac{7d}{8}\rfloor}$ & 2.73 &  -0.5858 & -0.7639 & -1 & -1.2862 & -1.6199 & -2 & -1.5053 & -1.7831 \\[1.2mm]
  \hline
\end{tabular}

\vspace{3mm}
Table 1. Values of $\frac{d}{2}$ and $\partial_{\lfloor\frac{7d}{8}\rfloor}$ for $d=2,\ldots,10.$

\end{center}

For $2\leq d\leq 10$, by Table 1, we have $\frac{d}{2}>-\partial_{\lfloor\frac{7d}{8}\rfloor}$,
which implies that $\frac{d}{2}+\partial_{\lfloor\frac{7d}{8}\rfloor}>0$. This completes the proof.
$ \ \ \ \ \ \Box$

\section{More results on the remoteness and distance eigenvalues}
\begin{lemma}\label{lem3.1}
Let $G$ be a connected graph of order $n$ with diameter $d$ and remoteness $\rho$. Then $$\rho\leq d-\frac{d^2-d}{2(n-1)}.$$
\end{lemma}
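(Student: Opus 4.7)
The plan is to bound $Tr(v)$ for an arbitrary vertex $v$ by a layer-counting argument on distance classes, and then optimize over the eccentricity of $v$.

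First I would fix a vertex $v$ achieving the remoteness, i.e., with $Tr(v)/(n-1) = \rho$, and let $e = e(v) \leq d$ denote its eccentricity. Partition $V(G)\setminus\{v\}$ into the distance classes $N_i = \{u : d(v,u) = i\}$ for $i=1,\ldots,e$. Since $G$ is connected and $v$ has a vertex at distance exactly $e$, any shortest path from $v$ to such a vertex hits every intermediate distance, so $|N_i| \geq 1$ for each $i \in \{1,\ldots,e\}$; also $\sum_{i=1}^e |N_i| = n-1$.

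Next, I would upper-bound $Tr(v) = \sum_{i=1}^e i\,|N_i|$ under the constraints $|N_i|\geq 1$ and $\sum_i|N_i| = n-1$. Since the coefficients $i$ are increasing, the maximum is attained by placing exactly one vertex in each of $N_1,\ldots,N_{e-1}$ and the remaining $n-e$ vertices in $N_e$, giving
\begin{equation*}
Tr(v) \;\leq\; \sum_{i=1}^{e-1} i \;+\; e(n-e) \;=\; \frac{e(e-1)}{2} + e(n-e) \;=\; e(n-1) - \frac{e(e-1)}{2}.
\end{equation*}
Dividing by $n-1$ yields $\rho \leq e - \dfrac{e(e-1)}{2(n-1)}$.

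Finally, I would check that the right-hand side, viewed as a function $h(x) = x - \dfrac{x(x-1)}{2(n-1)}$, is monotonically non-decreasing for $x \leq n-1$. A quick derivative check gives $h'(x) = 1 - \dfrac{2x-1}{2(n-1)} \geq 0$ whenever $x \leq n-\tfrac{1}{2}$, which certainly holds for $x = e \leq d \leq n-1$. Hence $h(e) \leq h(d)$, so
\begin{equation*}
\rho \;\leq\; h(e) \;\leq\; h(d) \;=\; d - \frac{d(d-1)}{2(n-1)} \;=\; d - \frac{d^2 - d}{2(n-1)},
\end{equation*}
which is the desired inequality. No part of this looks genuinely difficult; the only nontrivial step is recognizing the right extremal configuration (all slack concentrated in the deepest layer), and the monotonicity check is a one-line calculus exercise.
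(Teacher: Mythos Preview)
Your proof is correct and follows essentially the same idea as the paper's: bound $Tr(v)$ by concentrating all surplus vertices in the farthest distance class, yielding $Tr(v)\le 1+2+\cdots+d+d(n-1-d)$, then divide by $n-1$. Your version is in fact more careful, since you explicitly handle the possibility that the remoteness-achieving vertex has eccentricity $e<d$ via the monotonicity of $h(x)=x-\frac{x(x-1)}{2(n-1)}$, a step the paper tacitly skips.
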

\begin{proof}
Let $v$ be the vertex with maximum transmission of $G.$ Then $$tr(v)\leq 1+2+\cdots+d+d(n-1-d)=dn-\frac{d^2-d}{2(n-1)}.$$
It follows that $$\rho\leq d-\frac{d^2-d}{2(n-1)}.$$ This completes the proof.
\end{proof}

Lin \cite{Lin-1} showed that $\partial_n(G)\leq -d$ with equality holding if and only if $G$ is complete multipartite graph.
Thus we have the following result.

\begin{theorem}\label{thm2.1}
Let $G$ be a connected graph of order $n$ with diameter $d$ and remoteness $\rho$. Then
$$\rho+\partial_n\leq -\frac{d^2-d}{2(n-1)}$$ with equality holding if and only if $G\cong K_n.$
\end{theorem}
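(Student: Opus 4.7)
The plan is to combine Lemma~3.1 with the Lin inequality stated just before the theorem, namely $\partial_{n} \leq -d$ with equality if and only if $G$ is complete multipartite. Summing these two bounds gives
\[
\rho + \partial_{n} \leq \left(d - \frac{d^{2}-d}{2(n-1)}\right) + (-d) = -\frac{d^{2}-d}{2(n-1)},
\]
which is exactly the claimed inequality. So the inequality itself is a one-line addition; the real content lies in the equality characterization.

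For the equality case I would argue that, since the estimate is a sum of two independent upper bounds, equality in the sum forces equality in each summand. Equality in the Lin bound makes $G$ a complete multipartite graph, so $d \in \{1, 2\}$. If $d = 1$, then $G \cong K_{n}$, $\rho = 1$, $\partial_{n} = -1$, and both sides of the inequality equal $0$. If $d = 2$, I would revisit the proof of Lemma~3.1: its equality case requires the vertex $v$ realising $\rho$ to have eccentricity $2$ with exactly one vertex at distance $1$ and the remaining $n-2$ vertices at distance $2$, so $\deg(v) = 1$; in a complete multipartite graph this is only possible when the part containing $v$ has size $n-1$, forcing $G \cong K_{1, n-1}$.

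The main obstacle is that this analysis produces two extremal graphs, $K_{n}$ and $K_{1, n-1}$, rather than $K_{n}$ alone. A direct computation confirms that $K_{1, n-1}$ satisfies $\rho + \partial_{n} = \tfrac{2n-3}{n-1} - 2 = -\tfrac{1}{n-1} = -\tfrac{d^{2}-d}{2(n-1)}$ with $d = 2$, so it genuinely attains equality. Thus either the conclusion should read ``equality iff $G \cong K_{n}$ or $G \cong K_{1, n-1}$'', or an additional refinement beyond the two cited lemmas is needed to rule out $K_{1, n-1}$; I would present the proof along the lines above and explicitly flag this subtle point in the equality characterization.
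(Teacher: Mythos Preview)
Your approach is exactly the paper's: it simply states Lemma~3.1 and Lin's bound $\partial_n\le -d$ and then writes ``Thus we have the following result,'' without spelling out the addition or the equality analysis at all. So on the inequality you and the paper agree completely.

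On the equality case you have in fact gone further than the paper and uncovered a genuine gap in the \emph{statement} of the theorem. Your reasoning is correct: equality forces $\partial_n=-d$ (hence $G$ complete multipartite, so $d\in\{1,2\}$) and equality in Lemma~3.1. For $d=2$ the latter forces the remoteness-realising vertex to have degree~$1$, which in a complete multipartite graph pins down $G\cong K_{1,n-1}$. Your verification that
\[
\rho(K_{1,n-1})+\partial_n(K_{1,n-1})=\frac{2n-3}{n-1}-2=-\frac{1}{n-1}=-\frac{d^2-d}{2(n-1)}
\]
is correct, so the star really does attain equality. The paper's ``with equality iff $G\cong K_n$'' is therefore incomplete; the right characterization is $G\cong K_n$ or $G\cong K_{1,n-1}$. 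Nothing in the two cited lemmas can exclude the star, so no ``additional refinement'' will save the stated version---flagging this, as you propose, is the appropriate move.
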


\begin{corollary}{\rm \cite{AH}}
Let $G$ be a connected graph of order $n$ with diameter $d$ and remoteness $\rho$. Then
$$\rho+\partial_n\leq 0$$ with equality holding if and only if $G\cong K_n.$
\end{corollary}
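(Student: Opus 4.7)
The plan is to derive the corollary directly from Theorem \ref{thm2.1}, using only the sign of the quantity $-\tfrac{d^2-d}{2(n-1)}$ and tracking the equality case carefully. I will not need any new spectral or distance-combinatorial input beyond what Theorem \ref{thm2.1} already supplies.

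First I would invoke Theorem \ref{thm2.1} to get the stronger inequality
\[
\rho + \partial_n \;\leq\; -\frac{d^2-d}{2(n-1)}.
\]
Since $G$ is connected with $n\geq 2$, its diameter satisfies $d\geq 1$, and hence $d^2 - d = d(d-1) \geq 0$. Dividing by $2(n-1) > 0$ and negating gives $-\tfrac{d^2-d}{2(n-1)} \leq 0$, so chaining with the above display yields $\rho + \partial_n \leq 0$, which is the desired inequality.

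For the equality characterization I would argue in both directions. If $G \cong K_n$ then $d=1$, $\rho = 1$, and by Lemma \ref{lem2.1} (or direct computation) $\partial_n = -1$, so $\rho+\partial_n = 0$. Conversely, if $\rho+\partial_n = 0$, then the two inequalities in the chain
\[
0 \;=\; \rho+\partial_n \;\leq\; -\frac{d^2-d}{2(n-1)} \;\leq\; 0
\]
must both be equalities; the first equality forces $G\cong K_n$ by the extremal characterization in Theorem \ref{thm2.1}, which is the conclusion we want. (Equivalently, the second equality already forces $d=1$, hence $G\cong K_n$.)

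There is no real obstacle here: the corollary is essentially a sign observation on top of Theorem \ref{thm2.1}. The only subtlety worth stating explicitly is that the equality case of the weaker inequality $\rho+\partial_n\leq 0$ inherits its rigidity from the equality case of the sharper bound in Theorem \ref{thm2.1}, rather than needing a separate argument.
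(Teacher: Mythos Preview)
Your proposal is correct and matches the paper's approach: the paper presents this corollary as an immediate consequence of Theorem~\ref{thm2.1}, and your argument---observing that $-\tfrac{d^2-d}{2(n-1)}\le 0$ for $d\ge 1$ and that equality in the chain forces the equality case of Theorem~\ref{thm2.1}---is exactly the intended deduction.
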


Indulal \cite{Indulal} gave the following lower bound on the distance spectral radius of $G$.

\begin{lemma}{\rm\cite{Indulal}}\label{lem3.2}
Let $G$ be a connected graph of order $n$ with Wiener index $W$. Then $$\partial_1\geq\frac{2W}{n}$$
and the equality holds if and only if $G$ is a transmission regular.
\end{lemma}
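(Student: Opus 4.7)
The plan is to use the Rayleigh quotient characterization of the largest eigenvalue of a real symmetric matrix applied to the all-ones vector $\mathbf{1} \in \mathbb{R}^n$. Since $D(G)$ is symmetric, we have
\[
\partial_1 = \max_{x \neq 0} \frac{x^T D(G) x}{x^T x} \geq \frac{\mathbf{1}^T D(G) \mathbf{1}}{\mathbf{1}^T \mathbf{1}}.
\]
The numerator is simply $\sum_{i,j} d_{ij}$, which equals $2W$ by the definition of the Wiener index $W = \sum_{i<j} d_{ij}$, while the denominator equals $n$. This immediately yields $\partial_1 \geq 2W/n$.

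For the equality case, I would invoke the standard fact that equality in the Rayleigh quotient at a vector $x$ forces $x$ to be an eigenvector associated with the extremal eigenvalue being approached. Thus equality holds precisely when $D(G)\mathbf{1} = \partial_1 \mathbf{1}$. Since the $i$-th row sum of $D(G)$ is exactly the transmission $Tr(v_i) = D_i$, this eigenvalue equation says $D_i = \partial_1$ for every $i$, which is the definition of $G$ being transmission regular.

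The one subtlety to handle carefully is the direction ``$\mathbf{1}$ is an eigenvector $\Rightarrow$ $\mathbf{1}$ corresponds to $\partial_1$ (and not to some smaller eigenvalue).'' For this I would appeal to the Perron–Frobenius theorem: $D(G)$ is a nonnegative symmetric matrix, and since $G$ is connected, all off-diagonal entries are strictly positive, so $D(G)$ is irreducible. Hence the Perron eigenvector of $D(G)$ is unique up to scaling and entrywise positive. If $\mathbf{1}$ (which is entrywise positive) is an eigenvector, then it must be the Perron eigenvector and so correspond to $\partial_1$. This removes any ambiguity in the equality statement.

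There is no real obstacle here; the proof is essentially a two-line Rayleigh quotient argument plus a short Perron–Frobenius remark to pin down the equality case. The only place one has to be a little careful is in arguing that the transmission-regular condition comes out as both necessary and sufficient, which is immediate once one observes that the row sums of $D(G)$ are exactly the transmissions.
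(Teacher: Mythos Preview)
Your argument is correct. The paper does not give its own proof of this lemma: it is quoted from \cite{Indulal} and used as a black box. Your Rayleigh-quotient computation with the all-ones vector, together with the Perron--Frobenius remark to pin down the equality case, is exactly the standard proof of this inequality and matches what one finds in the cited source.
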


Let $G(\lfloor\frac{d+1}{2}\rfloor,\lceil\frac{d+1}{2}\rceil)$ denote by the graph obtained from $K_{n-1-d}$ by joining an
endvertex of $P_{\lfloor\frac{d+1}{2}\rfloor}$ and $P_{\lceil\frac{d+1}{2}\rceil}$ to each vertex of $K_{n-1-d}$.
Zhang \cite{XL} determined that $G(\lfloor\frac{d+1}{2}\rfloor,\lceil\frac{d+1}{2}\rceil)$ attains the minimal distance spectral
radius among all graphs with given diameter $d$ and order $n$. Then we have the following result.

\begin{lemma}\label{lem3.3}
Let $G$ be a connected graph of order $n$ with diameter $d\geq 3$. Then $$\partial_1> n-2+d.$$
\end{lemma}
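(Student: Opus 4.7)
My strategy is to combine Lemma~\ref{lem3.2}, which gives $\partial_1\geq 2W(G)/n$, with an explicit lower bound on the Wiener index extracted from any diametral pair. This avoids invoking Zhang's minimality result and reduces the problem to an elementary triangle-inequality estimate.

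Fix a diametral path $v_0v_1\cdots v_d$. The triangle inequality yields $d(w,v_0)+d(w,v_d)\geq d$ for every vertex $w$, so summing over $w$ gives $Tr(v_0)+Tr(v_d)\geq dn$. For each of the remaining $n-2$ vertices $w$, I would split
$Tr(w)=d(w,v_0)+d(w,v_d)+\sum_{v\neq w,v_0,v_d}d(w,v)$
and use $d(w,v)\geq 1$ on the $n-3$ trivial terms to obtain $Tr(w)\geq d+n-3$. Adding all transmissions, $2W(G)\geq dn+(n-2)(d+n-3)=2d(n-1)+(n-2)(n-3)$, and Lemma~\ref{lem3.2} then gives $\partial_1\geq\frac{2d(n-1)+(n-2)(n-3)}{n}$. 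A one-line simplification shows this last quantity is $\geq n-2+d$ iff $(d-3)(n-2)\geq 0$, which holds for every $d\geq 3$.

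Upgrading to a strict inequality requires a short case split. When $d\geq 4$ we have $(d-3)(n-2)>0$, so the algebraic simplification is strict and $\partial_1\geq 2W/n>n-2+d$ directly. When $d=3$ one of two things must happen: either $G$ is not transmission-regular, in which case the equality clause of Lemma~\ref{lem3.2} fails and $\partial_1>2W/n\geq n+1=n-2+d$; or $G$ is transmission-regular, in which case $\partial_1$ equals the common transmission $T=Tr(v_0)$, and BFS from $v_0$ provides at least one vertex at each of the distances $0,1,2,3$, forcing $T\geq 0+1+2+3+(n-4)=n+2>n-2+d$.

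The only delicate point is the boundary case $d=3$: the Wiener-index bound is tight there (for example $2W/n=n-2+d=5$ when $G=P_4$), so the strict inequality cannot come from the Wiener bound alone and must be pulled from either the equality clause of Lemma~\ref{lem3.2} or the refined BFS bound on a single transmission. Once this dichotomy is noticed, all remaining work is routine triangle-inequality bookkeeping and an elementary algebraic check.
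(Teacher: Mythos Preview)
Your argument is correct and takes a genuinely different route from the paper's. The paper invokes Zhang's extremal result to reduce to the single graph $G(\lfloor\frac{d+1}{2}\rfloor,\lceil\frac{d+1}{2}\rceil)$, computes its Wiener index exactly, and then applies Lemma~\ref{lem3.2}; in fact the paper's computation is only carried out for one parity and concludes with ``since $d\geq 4$'', so the case $d=3$ is not visibly handled there. You bypass the extremal result entirely: the triangle-inequality estimate $2W\geq 2d(n-1)+(n-2)(n-3)$ holds for every connected $G$ of diameter $d$, and your dichotomy at $d=3$ (non-transmission-regular versus the BFS bound $Tr(v_0)\geq 1+2+3+(n-4)=n+2$) cleanly supplies the strict inequality that the Wiener bound alone cannot give. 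The trade-off is that the paper's approach, when completed, gives a sharper numerical lower bound on $\partial_1$ via the exact extremal Wiener index, whereas your bound is looser but self-contained and uniform in $d\geq 3$.
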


\begin{proof}
Note that $G(\lfloor\frac{d+1}{2}\rfloor,\lceil\frac{d+1}{2}\rceil)$ attains the minimal distance spectral
radius among all graphs with given diameter $d$ and order $n$. Then we only need to show that
$\partial_1(G(\lfloor\frac{d+1}{2}\rfloor,\lceil\frac{d+1}{2}\rceil))>n-2+d$ when $d\geq 3.$ By a simple calculation, we have
$$W= \left\{
  \begin{array}{ccccccccccc}
       {d+2\choose{3}}+{n-d-1\choose{2}}+\frac{d^2+2d}{4}(n-d-1), \ \mbox{$n$ is even,} \\[4mm]
       {d+2\choose{3}}+{n-d-1\choose{2}}+\frac{d^2+2d+5}{4}(n-d-1), \ \mbox{$n$ is odd.}
\end{array}
\right.$$ By Lemma \ref{lem3.2} and $G$ is not transmission regular, we only need to show that $2W\geq n(n-1-d)$.
For the sake of simplicity, we only give the proof of $n$ is even.
\begin{eqnarray*}
2W&=&\frac{d(d+1)(d+2)}{3}+(n-d-1)(n-d-2)+\frac{d^2+2d}{2}(n-d-1)\\
&=&\frac{d(d+1)(d+2)}{3}+(n-d-1)(n-d-2)+\frac{d^2+2d}{2}(n-d-1)\\
&=&\frac{d(d+1)(d+2)}{3}+n^2+(\frac{d^2}{2}-d-1)n-\frac{d^2}{2}(d+1)\\
&\geq&n(n-2+d) \ \mbox{since $d\geq 4$}.
\end{eqnarray*}
This completes the proof.
\end{proof}

Aouchiche and Hansen \cite{AH} showed that $$\partial_1-\rho\geq n-2,$$ with equality holding if and only if $G\cong K_n.$
In the following, we will give the lower bound on $\partial_1-\rho$ when $G\ncong K_n.$

\begin{theorem}
Let $G\ncong K_n$ be a connected graph of order $n$ with remoteness $\rho$. Then $$\partial_1-\rho\geq \frac{n-1+\sqrt{(n-1)^2+8}}{2}-\frac{n}{n-1},$$
and the equality holds if and only if $G\cong K_n-e$ where $e$ is an edge of $G$.
\end{theorem}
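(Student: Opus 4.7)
The plan is to split on the diameter $d$ of $G$. Set $A=\frac{n-1+\sqrt{(n-1)^2+8}}{2}$, the target quantity and the Perron root of $D(K_n-e)$; $A$ satisfies $A^2-(n-1)A-2=0$, so $A-(n-1)=2/A<2/(n-1)$, and consequently $A-\frac{n}{n-1}<n-2+\frac{1}{n-1}$. I will use this upper bound on the target throughout.

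If $d\geq 3$, Lemma \ref{lem3.1} gives $\rho\leq d-\frac{d^2-d}{2(n-1)}$ and Lemma \ref{lem3.3} gives $\partial_1>n-2+d$, so
\[
\partial_1-\rho>n-2+\frac{d^2-d}{2(n-1)}\geq n-2+\frac{3}{n-1}>A-\frac{n}{n-1},
\]
settling this case with strict inequality (no extremal graph with $d\geq 3$).

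Now assume $d=2$ and let $\delta$ be the minimum degree of $G$. If $\delta=n-2$, a short degree argument (the unique non-neighbor of a degree-$(n-2)$ vertex must itself have degree $n-2$) forces $G\cong K_n-e$; a direct spectral computation on $K_n-e$ gives $\partial_1=A$ and $\rho=\frac{n}{n-1}$, so equality is attained. Otherwise $\delta\leq n-3$. Pick a vertex $v$ of minimum degree, a non-neighbor $u$ of $v$, set $e=\{u,v\}$, and $\Delta=D(G)-D(K_n-e)\geq 0$ (each non-edge of $G$ other than $e$ turns a $1$-entry of $D(K_n-e)$ into an entry $\geq 2$ in $D(G)$). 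Let $\mathbf{x}$ be the unit Perron eigenvector of $D(K_n-e)$; by symmetry it has the form $(a,a,b,\ldots,b)^\top$ with $a$ on $u,v$ and $b$ on the other $n-2$ coordinates, $a/b=(A-n+3)/2$, and $2a^2+(n-2)b^2=1$. The Rayleigh quotient gives $\partial_1(G)\geq \mathbf{x}^\top D(G)\mathbf{x}=A+\mathbf{x}^\top\Delta\mathbf{x}$, and each of the $n-2-\delta$ non-neighbors of $v$ distinct from $u$ contributes $2ab$ to $\mathbf{x}^\top\Delta\mathbf{x}$. Combined with the identity $\rho=\frac{n}{n-1}+\frac{n-2-\delta}{n-1}$, this rearranges to
\[
\partial_1-\rho\geq A-\frac{n}{n-1}+(n-2-\delta)\Bigl(2ab-\frac{1}{n-1}\Bigr).
\]

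The whole proof therefore reduces to the strict inequality $ab>\frac{1}{2(n-1)}$, and this is the main obstacle, the only step involving real calculation. Substituting the formulas for $a,b$ rewrites it as $t^2-2(n-1)t+2(n-2)<0$ in $t=A-n+3$; using $A^2=(n-1)A+2$ in the equivalent form $t^2+(n-5)t=2(n-2)$ collapses it to the linear inequality $(3n-7)t>4(n-2)$, which holds because $t>2$ (equivalent to $A>n-1$) and $n\geq 4$. This delivers strict inequality in the target whenever $\delta\leq n-3$, so the only graph achieving equality is $G\cong K_n-e$.
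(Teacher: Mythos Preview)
Your treatment of the case $d\geq 3$ matches the paper exactly, and your Rayleigh-quotient argument for $d=2$ with $\delta\leq n-3$ is correct and genuinely different from the paper's route: the paper instead bounds $\partial_1\geq 2W/n$ via Lemma~\ref{lem3.2} (using that a minimum-degree vertex already forces at least $n-1-\delta$ non-edges) and shows directly that $\partial_1-\rho\geq n-2+\frac{1}{n-1}$ whenever $\delta\leq n-3$. Your approach has the pleasant feature that the comparison graph $K_n-e$ appears explicitly in the bound, so the extremal role of $K_n-e$ is visible in the mechanism.

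There is, however, a genuine gap in the case $\delta=n-2$. Your ``short degree argument'' only shows that the complement $\overline{G}$ has maximum degree $1$, i.e.\ $G\cong K_n-M$ for some matching $M$; it does \emph{not} force $|M|=1$. For instance $C_4\cong K_4-\{\text{perfect matching}\}$ has $d=2$ and $\delta=n-2=2$ but is not $K_4-e$. So the equality characterisation is not yet established.

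The gap is easy to close by either method. With your own machinery: when $|M|\geq 2$, take a second missing edge $wz$; necessarily $w,z\notin\{u,v\}$ since $u$ and $v$ each have a unique non-neighbour, so this edge contributes $2b^2>0$ to $\mathbf{x}^{\top}\Delta\mathbf{x}$, giving $\partial_1(G)>A$, while still $\rho(G)=n/(n-1)$, hence strict inequality. The paper instead invokes the well-known strict monotonicity $\partial_1(G)>\partial_1(G+e)$ for connected $G$: adding back all but one edge of $M$ yields $K_n-e$, so $\partial_1(K_n-M)>\partial_1(K_n-e)=A$ whenever $|M|\geq 2$.
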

\begin{proof}
Note that $\partial_1(K_n-e)-\rho(K_n-e)=\frac{n-1+\sqrt{(n-1)^2+8}}{2}-\frac{n}{n-1}$. It is easy to see that
$\sqrt{(n-1)^2+8}<n-1+\frac{4}{n-1}.$ Then $$\partial_1(K_n-e)-\rho(K_n-e)=\frac{n-1+\sqrt{(n-1)^2+8}}{2}-\frac{n}{n-1}<n-2+\frac{1}{n-1}.$$
Let $G$ be a connected graph with diameter $d$. If $d\geq 3$, then by Lemmas \ref{lem3.1} and \ref{lem3.3},
we have
\begin{eqnarray*}
\partial_1(G)-\rho(G)&\geq& n-2+\frac{d^2-d}{2(n-1)}\\[2mm]
&>&n-2+\frac{1}{n-1}\\[2mm]
&>&\partial_1(K_n-e)-\rho(K_n-e).
\end{eqnarray*}
So in the following, we may assume that $d=2$. Then
$$\rho=\frac{\delta+2(n-1-\delta)}{n-1}=\frac{2(n-1)-\delta}{n-1}=2-\frac{\delta}{n-1}$$
and by Lemma \ref{lem3.1}, we have
$$\partial_1\geq\frac{n(n-1)+2(n-1-\delta)}{n}=n+1-\frac{2(1+\delta)}{n}.$$
It follows that
\begin{eqnarray*}
\partial_1-\rho&\geq& n-1-\frac{2(1+\delta)}{n}+\frac{\delta}{n-1}\\[2mm]
&=&n-1-\frac{n\delta+(n-1)(2\delta+2)}{n(n-1)}.
\end{eqnarray*}
If $\delta\leq n-3$, then we have $$\partial_1-\rho\geq n-2+\frac{1}{n-1}>\partial_1(K_n-e)-\rho(K_n-e).$$
If $\delta=n-2$ and note that $\partial_1(G)>\partial_1(G+e)$, then $\partial_1(G)-\rho(G)\geq\partial_1(K_n-e)-\rho(K_n-e)$
with equality holding if and only if $G\cong K_n-e$. Thus we complete the proof.
\end{proof}

\noindent {\textbf{Acknowledgement.}} The first author is supported
by the National Natural Science Foundation of China (No. 11401211
and 11471211), the China Postdoctoral Science Foundation (No.
2014M560303) and Fundamental Research Funds for the Central
Universities (No. 222201414021). The second author is supported by
the National Research Foundation funded by the Korean government
with Grant no. 2013R1A1A2009341. The third author is supported by
the National Natural Science Foundation of China (No. 11161046) and
by Xingjiang Talent Youth Project (No. 2013721012).

 \end{document}